\documentclass[12pt]{amsart}
\usepackage{amsmath}
\usepackage{hyperref}
\usepackage{amssymb}
\usepackage{latexsym}
\usepackage{amscd}
\usepackage{graphicx} 
\usepackage{amsthm}
\usepackage{mathrsfs}
\usepackage{xypic}
\usepackage{bm}

\newdimen\AAdi%
\newbox\AAbo%
%
\def\AAk#1#2{\s_etbox\AAbo=\hbox{#2}\AAdi=\wd\AAbo\kern#1\AAdi{}}%
\def\AAr#1#2#3{\s_etbox\AAbo=\hbox{#2}\AAdi=\ht\AAbo\raise#1\AAdi\hbox{#3}}%
\font\tenmsb=msbm10 at 12pt \font\sevenmsb=msbm7 at 8pt
\font\fivemsb=msbm5 at 6pt
\newfam\msbfam
\textfont\msbfam=\tenmsb \scriptfont\msbfam=\sevenmsb
\scriptscriptfont\msbfam=\fivemsb

\textwidth 15.0cm \textheight 22cm \topmargin 0cm \oddsidemargin
0.5cm \evensidemargin 0.5cm
\parindent = 5 mm
\hfuzz     = 6 pt
\parskip   = 3 mm

\newtheorem{thm}{Theorem}[section]
\newtheorem{lem}{Lemma}[section]

\newcommand{\ba}{\begin{array}}
\newcommand{\ea}{\end{array}}

\newcommand{\Section}[2]{\setcounter{equation}{0}
\allowdisplaybreaks
\section[#1]{#2}}

\def\n{\nabla}

\def\sr#1{\mathscr{#1}}

\def\f#1#2{\frac{#1}{#2}}

\def\a{\alpha}

\def\de{\delta}
\def\De{\Delta}

\def\ep{\varepsilon}

\def\la{\lambda}

\def\th{\theta}

\begin{document}
\title
[Some results on Chern's problem] {Some results on Chern's problem}

\author
[Qi Ding and Y.L. Xin ]{Qi Ding and Y. L. Xin }
\address
{Institute of Mathematics, Fudan University, Shanghai 200433,
China}\email{09110180013@fudan.edu.cn}  \email{ylxin@fudan.edu.cn}
\thanks{The research was partially supported by
NSFC and SFECC}
\begin{abstract}
For a compact minimal hypersurface $M$ in $S^{n+1}$ with the
squared length of the second fundamental form $S$ we confirm that
there exists a positive constant $\de(n)$ depending only on $n,$
such that if $n\leq S\leq n +\delta(n)$, then $S\equiv n$, i.e.,
$M$ is a   Clifford minimal hypersurface, in particular, when
$n\ge 6,$  the pinching constant $\de(n)=\f{n}{23}.$
\end{abstract}

\renewcommand{\subjclassname}{
 \textup{2000} Mathematics Subject Classification} \subjclass{49Q05,
53A10, 53C40.}
\date{}
\keywords{minimal hypersurface; pinching problem; Clifford 
minimal hypersurface, intrinsic rigidity}

\maketitle

\Section{Introduction}{Introduction}
\medskip

Let $M$ be a compact (without boundary) minimal hypersurface in
the unit sphere $S^{n+1}$ with the second fundamental form $B,$
which can be viewed as a cross-section of the vector bundle
Hom($\odot^2TM, NM$) over $M,$ where  $TM$ and $NM$ denote the
tangent bundle and the normal bundle  along $M$, respectively.
Minimal submanifolds in the sphere are  interesting not only in
its own right, but also are related to other interesting problems
(see \cite{x}, for example). The simplest hypersurface in
$S^{n+1}$ is the $n-$equator, totally geodesic hypersurface. The
important examples of minimal hypersurfaces in $S^{n+1}$ are
Clifford minimal hypersurfaces
$$S^k\left(\sqrt{\frac{k}{n}}\right)\times
S^{n-k}\left(\sqrt{\frac{n-k}{n}}\right),\quad k = 1, 2, \cdots,
n-1.$$

J. Simons \cite{si} discovered the intrinsic rigidity result.
Shortly afterwards  Chern-do Carmo-Kobayashi \cite{c-d-k} and
Lawson \cite{l} independently showed that Simons' result is sharp
and the equality is realized by the Clifford minimal
hypersurfaces in the unit sphere.  In their same paper
\cite{c-d-k},  Chern-do Carmo-Kobayashi proposed to study
subsequent gaps for the scalar curvature (or the squared length
of the second fundamental form $S=|B|^2$). The problem was also
collected by S. T. Yau  in the well-known problem section in
\cite{y1} and \cite{y2}.

Peng-Terng \cite{p-t1} made the first effort to attack the
Chern's problem and  confirmed the second gap. Precisely, they
proved that if the scalar curvature of $M$ is a constant, then
there exists a positive constant $C(n)$ depending only on $n$
such that if $n\leq S \leq n+C(n)$, then $S= n$, where $S$ stands
for squared norm of the second fundamental form. Later, the
pinching constant $C(n)$ was improved to $\f{1}{3}n,\, n>3$ by
Cheng-Yang \cite{c-y}, and  to $\f{3}{7}n,\, n>3$ by Suh-Yang
\cite{s-y}, respectively.

More generally, Peng-Terng \cite{p-t2} obtained  pinching results
for minimal hypersurfaces without the constant scalar curvature
assumption. They obtained that if $M$ is a compact minimal
hypersurface in $S^{n+1}$, then there exists a positive constant
$\delta(n)$ depending only on $n$ such that if $n\leq S \leq n +
\delta(n),\; n\le 5$, then $S \equiv n$ which characterize the
Clifford minimal hypersurfaces. Later, Cheng-Ishikawa \cite{c-i}
improved the the previous pinching constant  when $n\leq 5$, and
Wei-Xu \cite{w-x} extended the result to $n = 6, 7$. Recently,
Zhang \cite{z} extended  the  results to  $n\leq 8$ and improved
the previous pinching constant. The key point is to estimate the
upper bound of $\sr{A}-2\sr{B}$ in terms of $S$ and $|\n B|$ in
all the above mentioned papers (please see the definition of
$\sr{A},\sr{B}$ in next section).

In this paper, we continue to study the second gap problem
without the constancy of the scalar curvature. We obtain new
estimates for $\sr{A}-2\sr{B}$ in terms $S,\; |\n B|$ and another
higher order invariant of the second fundamental form of the
minimal hypersurface $M$ in $S^{n+1}$. Then, by the integral
formulas established in \cite{p-t2} we can carried out more
delicate integral estimates, which enable us to confirm the
second gap in all dimensions. Firstly, we give the  quantitative
result to show our technique fits all dimensions. Then, we refine
the estimates to obtain concrete pinching constant for dimension
$n\ge 6$ where they are better than all previous results for dimension $n\ge 7.$

\begin{thm}\label{th1}
Let M be a compact minimal hypersurface in $S^{n+1}$ with the
squared length of the second fundamental form $S.$ Then there
exists a positive constant $\de(n)$ depending only on $n$, such
that if $n\leq S\leq n +\delta(n)$, then $S\equiv n$, i.e., M is
a Clifford minimal hypersurface.
\end{thm}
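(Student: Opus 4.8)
The plan is to run the Peng--Terng integral machinery of \cite{p-t2}, as sharpened in \cite{c-i,w-x,z}, but to feed it a genuinely better pointwise bound for $\sr{A}-2\sr{B}$. Work in a local orthonormal frame, write $B=(h_{ij})$, $S=\sum_{i,j}h_{ij}^2$, $\nabla B=(h_{ijk})$, and introduce the higher power sums $f_3=\sum_{i,j,k}h_{ij}h_{jk}h_{ki}$ and $f_4=\sum_{i,j}\big(\sum_k h_{ik}h_{kj}\big)^2$; the ``further higher order invariant'' alluded to in the introduction will be assembled from $f_4$ together with the components of $\nabla B$ measured in a frame that diagonalizes $B$. \textbf{Step 1 (Simons' equation).} Since $M$ is minimal in $S^{n+1}$, $\tfrac12\Delta S=|\nabla B|^2-S(S-n)$, so compactness gives $\int_M|\nabla B|^2=\int_M S(S-n)$. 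Under the hypothesis $n\le S\le n+\delta(n)$ we have $S-n\ge0$ everywhere and $\int_M|\nabla B|^2=\int_M S(S-n)\le (n+\delta(n))\int_M(S-n)$; the pointwise bound $S-n\le\delta(n)$ is the smallness that will be used repeatedly to absorb the non-leading terms.

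\textbf{Step 2 (the second-order integral formula).} Differentiating Simons' equation once more, commuting covariant derivatives by the Codazzi and Ricci identities, and combining with the identity for $\Delta f_3$, integration over $M$ produces the Peng--Terng formula, schematically
\[
\int_M |\nabla^2 B|^2 + c_1\int_M\sr{A}+c_2\int_M\sr{B}\;=\;\int_M\big(a(n)\,S+b(n)\big)|\nabla B|^2+(\text{terms of lower order in }S-n),
\]
with $\sr{A},\sr{B}$ the quartic expressions in $(B,\nabla B)$ recalled in Section~2. Combining this with the refined Kato inequality $|\nabla B|^2\ge\big(1+\tfrac2n\big)\big|\nabla|B|\big|^2$ and with Step~1, the whole theorem is reduced to a sufficiently good upper bound for $\sr{A}-2\sr{B}$.

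\textbf{Step 3 (the new pointwise estimate --- the crux).} At a fixed point diagonalize $B$, so $\sum_i\lambda_i=0$, $\sum_i\lambda_i^2=S$, and split $\nabla B$ into its ``diagonal'' part $(h_{iik})$ and its ``off-diagonal'' part, exploiting the total symmetry of $h_{ijk}$. Expanding $\sr{A}-2\sr{B}$ into monomials in the $\lambda_i$ and $h_{ijk}$ and estimating each by Cauchy--Schwarz, the gain over \cite{z} comes from \emph{not} discarding the terms that recombine into $f_4$ and into $\sum_i\lambda_i^2 h_{iik}^2$; this yields an inequality of the form
\[
\sr{A}-2\sr{B}\;\le\;\big(a_1(n)\,S+a_2(n)\big)\,|\nabla B|^2\;-\;c_0\,\big(\text{nonnegative higher order invariant}\big),\qquad c_0>0,
\]
strictly sharper than the pure $S|\nabla B|^2$ bounds used before. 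Keeping the sign of the correction term while controlling its coupling with the $f_3$-terms generated in Step~2 is the step I expect to be the main obstacle.

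\textbf{Step 4 (closing the argument and the explicit constant).} Substituting the Step~3 bound into the Step~2 identity, using Step~1 to exchange $\int_M S|\nabla B|^2$ and $\int_M|\nabla B|^2$ for $(S-n)$-weighted integrals, and bounding the ``bad'' coefficients via $S\le n+\delta(n)$, one collects everything into
\[
\big(\gamma(n)-\delta(n)\,\Gamma(n)\big)\int_M(S-n)\,\psi\;\le\;0
\]
with $\psi\ge0$ and explicit positive $\gamma(n),\Gamma(n)$ whose positive constant part survives after $\psi$ degenerates; choosing $\delta(n)$ smaller than $\gamma(n)/\Gamma(n)$ forces $\int_M(S-n)=0$, hence $S\equiv n$, so by Chern--do Carmo--Kobayashi and Lawson $M$ is a Clifford minimal hypersurface. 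Finally, for $n\ge 6$ a careful optimization of the split of $\nabla B$ and of the Cauchy--Schwarz constants in Steps~3--4 makes $\gamma(n)/\Gamma(n)$ large enough to allow the explicit choice $\delta(n)=n/23$.
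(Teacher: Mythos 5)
Your overall strategy --- reuse the Peng--Terng integral machinery and gain by a better pointwise bound on $\sr{A}-2\sr{B}$ --- is indeed the route of the paper. But the form of the key estimate you conjecture in Step~3 is not what the paper proves, and I do not think it can be obtained. You write
\[
\sr{A}-2\sr{B}\le\big(a_1(n)S+a_2(n)\big)|\nabla B|^2-c_0(\text{nonnegative higher order invariant}),\qquad c_0>0,
\]
i.e.\ you expect to \emph{subtract} a nonnegative correction. The paper's Lemma~\ref{ml} instead gives, with $\sr{F}=\sum_{i,j}(\lambda_i-\lambda_j)^2(1+\lambda_i\lambda_j)^2$,
\[
3(\sr{A}-2\sr{B})\le 2S|\nabla B|^2+C_1(n)|\nabla B|^2\,\sr{F}^{1/3},
\]
where the leading coefficient has been pushed down from the Peng--Terng constant $\alpha=\tfrac{\sqrt{17}+1}{2}\approx2.56$ to $2$, but only at the price of an \emph{added} $\sr{F}^{1/3}$ term. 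That extra term is essential: the pointwise inequality with coefficient $2$ and no correction is false when $\lambda_j^2-4\lambda_i\lambda_j>2S$. So the sign of your correction is wrong, and the inequality you posit has no visible proof.

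Moreover, your proposal leaves out the mechanism that makes the added $\sr{F}^{1/3}$ term harmless, which is really the technical heart. The paper inserts the Cheng--Ishikawa lower bound (their (2.5)) to produce $\tfrac34\int_M\sr{F}$ (plus the $S(S-n)^2$ term) on the left of the master inequality, then uses Young's inequality to split $\int_M C_1|\nabla B|^2\sr{F}^{1/3}$ into a piece absorbed by $\tfrac34(1-\theta)\int_M\sr{F}$ and a piece $\int_M|\nabla B|^3$, which is then controlled via Simons' equation together with the Peng--Terng bound for $\int_M|\nabla^2 B|^2$ (their (4.4)--(4.5)) and the identity $\tfrac12\int_M|\nabla S|^2=\int_M(n-S)|\nabla B|^2+\int_M S(S-n)^2$. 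None of this appears in your Steps~2--4; instead you invoke a refined Kato inequality $|\nabla B|^2\ge(1+\tfrac2n)\,|\nabla|B||^2$, which the paper does not use and which by itself cannot close the gap. As it stands, your Step~3 is a wish rather than a lemma, and Steps~2 and~4 are too schematic to substitute for the concrete interplay of (2.5), Lemma~\ref{ml}, and the $\int|\nabla B|^3$ estimate that actually drives the proof.
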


\begin{thm}\label{th3}
If the dimension is $n\ge 6,$ then the pinching constant
$\de(n)=\f{n}{23}.$
\end{thm}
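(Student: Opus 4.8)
The plan is to run the same integral machinery that already underlies Theorem \ref{th1}, but this time track all constants explicitly so as to extract the concrete pinching value $\de(n)=\f n{23}$ in the range $n\ge 6$. The starting point is the Simons-type identity, which after multiplying by a suitable power of $S$ and integrating over the closed manifold $M$ produces an inequality of the schematic form
\[
\int_M\bigl(S-n\bigr)\,|\n B|^2\,f(S)\;\ge\;\int_M g(S)\Bigl(2\sr B-\sr A\Bigr)+\text{(lower order)},
\]
together with the Kato-type/refined Cauchy–Schwarz inequalities for $|\n B|^2$ and $|\n S|^2$ that are standard in this circle of ideas (Peng–Terng, Cheng–Yang, etc.). Under the pinching hypothesis $n\le S\le n+\de(n)$ every occurrence of $S$ may be replaced by $n$ up to an error controlled by $\de(n)$, so the whole argument reduces to a single scalar inequality in $n$ and $\de$.

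The first substantive step is to insert the new estimate for $\sr A-2\sr B$ in terms of $S$, $|\n B|^2$ and the higher-order invariant announced in the introduction; this is the ingredient that was unavailable to the earlier authors and is what allows the constant to be pushed up to $\f n{23}$. Concretely, I would combine that estimate with the integral formulas from \cite{p-t2} to obtain, after integration by parts and discarding manifestly nonnegative terms, an inequality of the form
\[
0\;\ge\;\Bigl(a(n)-b(n)\,\de(n)\Bigr)\int_M (S-n)\,|\n B|^2,
\]
where $a(n)$ and $b(n)$ are explicit rational functions of $n$ coming from the coefficients in the Simons identity and in the new $\sr A-2\sr B$ bound. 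The second step is the bookkeeping: one checks that for $n\ge 6$ the choice $\de(n)=\f n{23}$ makes the bracket $a(n)-b(n)\,\de(n)$ strictly positive. Since $(S-n)|\n B|^2\ge0$ pointwise under the hypothesis, the integral must vanish, forcing $|\n B|\equiv0$ on the open set where $S>n$; a connectedness/continuation argument then gives $\n B\equiv0$ everywhere, hence $S$ is constant, and Simons' gap theorem (the equality case in \cite{si}, \cite{c-d-k}) identifies $M$ as a Clifford minimal hypersurface, i.e. $S\equiv n$.

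The main obstacle is the second step rather than the first: verifying that $a(n)-b(n)\,\de(n)>0$ for \emph{all} $n\ge6$ simultaneously requires controlling the behavior of the rational functions $a(n),b(n)$ both for small $n$ (where the constants are worst) and as $n\to\infty$ (where one needs $\de(n)=\f n{23}$ to scale correctly with $S\sim n$). I expect the proof to hinge on showing $b(n)/a(n)\le 23/n$ uniformly, which amounts to a monotonicity statement for an explicit rational function — tedious but elementary. A secondary technical point is ensuring that all the integrations by parts and all the terms discarded as nonnegative remain valid \emph{without} assuming constant scalar curvature; this is exactly where the extra higher-order invariant must be retained in the estimates rather than thrown away, and keeping it with the right sign throughout the computation is the delicate part of the bookkeeping.
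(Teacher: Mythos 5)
Your high-level strategy matches the paper's: integrate the second Bochner formula, insert an improved bound on $\sr{A}-2\sr{B}$ involving the invariant $\sr{F}=\sum_{i,j}(\lambda_i-\lambda_j)^2(1+\lambda_i\lambda_j)^2$, eliminate $|\n^2 B|^2$ and $|\n S|^2$ via the Peng--Terng identities, and reduce to a scalar inequality in $n$ and $\de$. However, there is a concrete gap: you do not identify \emph{which} improved $\sr{A}-2\sr{B}$ estimate is needed for $n\ge 6$. The proof of Theorem \ref{th1} uses Lemma \ref{ml}, $3(\sr{A}-2\sr{B})\le 2S|\n B|^2+C_1(n)|\n B|^2\sr{F}^{1/3}$, which is not strong enough to reach $\de=n/23$. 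The key to Theorem \ref{th3} is Lemma \ref{rl4}, the sharper bound $3(\sr{A}-2\sr{B})\le (S+4)|\n B|^2+C_3(n)|\n B|^2\sr{F}^{1/3}$ — the coefficient $S+4\approx n$ rather than $2S\approx 2n$ is what produces the leading term $-\f{\th}{2}(n+1)+1$ (instead of $(1-\th)n-3+\f{3\th}{2}$) in the final inequality, and this is exactly what makes the concrete rate $n/23$ attainable. Moreover Lemma \ref{rl4} is only valid under the a priori restriction $n\le S\le \f{16}{15}n$, so the argument silently requires $\de(n)\le n/15$; a proposal that does not flag this cannot verify self-consistency of the choice $\de=n/23$.

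A secondary issue is the schematic form of your final inequality. The paper's inequality (4.13) bounds $0$ above by a negative multiple of $\int_M|\n B|^2$, not of $\int_M(S-n)|\n B|^2$; this is achieved by retaining the $(S-n)$-weighted term with the correct sign and absorbing it, not by keeping it as the main coefficient. As written, your version would only force $|\n B|=0$ where $S>n$, necessitating the continuation argument you mention; the paper's stronger conclusion $\int_M|\n B|^2=0$ gives $\n B\equiv 0$ globally at once, whence $S$ is constant and Simons' formula $\tfrac12\De S=|\n B|^2+S(n-S)$ forces $S=n$. Finally, the "elementary but tedious" bookkeeping you defer is precisely where the theorem's content lies: the paper must make a specific choice ($\th=0.84$, $\ep=\sqrt{\de/(8(\a n+n-3+5\de))}$, $C_4(n)\le 1.1$) and check numerically that the bracket in (4.13) is negative for $\de=n/23$ and all $n\ge 6$; asserting a monotonicity statement without performing it leaves the claimed constant unverified.
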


\bigskip\bigskip

\Section{Preliminaries}{Preliminaries}
\medskip

Let $M$ be a  minimal hypersurface in $S^{n+1}$ with the second
fundamental form $B$. We choose a local orthonormal frame field
$\{e_1,\cdots, e_n,\nu\}$ of $S^{n+1}$ along $M$,  such that
$e_i$ are tangent to $M$ and $\nu$ is normal to $M$.

Set $B_{e_i e_j}=h_{ij}\nu$. Then the coefficients of the second
fundamental form $h_{ij}$ is a symmetric $2-$tensor on $M$. Its
trace vanishes everywhere by the minimal assumption on the
submanifold $M$. Let  $S$ denote the squared length of the second
fundamental form of $M$
$$S=|B|^2=\sum_{i, j}h_{ij}^2.$$ By the Gauss equation it is an
intrinsic invariant related to the scalar curvature of $M$.

J. Simons obtained the following Bochner type formula \cite{si}
\begin{equation}\label{bs}
\frac{1}{2}\Delta S=|\nabla B|^2+S(n-S),
\end{equation}
where
$$|\nabla B|^2=\sum_{i,j,k}h_{ijk}^2,$$
$h_{ijk}$ is symmetric in $i,j$ and $k$ by the Codazzi equations. To
study the second gap problem Peng-Terng \cite{p-t2} computed the
second Bochner type formula as follows
\begin{equation}\label{bpt}
\f{1}{2}\De |\n B|^2=|\n^2 B|^2+(2n+3-S)|\n
B|^2+3(2\sr{B}-\sr{A})-\f{3}{2}|\n S|^2,\end{equation} where
$$|\n^2 B|^2=\sum_{i,j,k,l}h_{ijkl}^2,\quad
\sr{A}=\sum_{i,j,k,l,m}h_{ijk}h_{ijl}h_{km}h_{ml},\quad
\sr{B}=\sum_{i,j,k,l,m}h_{ijk}h_{klm}h_{im}h_{jl}.$$ It follows that
\begin{equation}\label{pt1}
\int_M\sum_{i,j,k,l}h_{ijkl}^2=\int_M\left((S-2n-3)|\nabla
B|^2+3(\sr{A}-2\sr{B})+\frac{3}{2}|\nabla S|^2\right).
\end{equation}

For any fixed point $x\in M$, we take orthonormal frame field
near $x$, such that $h_{ij} =\lambda_i\delta_{ij}$ at $x$ for all
$i, j$. Then
$$\sum_i\lambda_i=0,\quad\quad \sum_i\lambda_i^2=S$$ and
$$\sr{A}=\sum_{i,j,k}h_{ijk}^2\lambda_i^2,\quad
\sr{B}=\sum_{i,j,k}h_{ijk}^2\lambda_i\lambda_j.$$ There are
pointwise estimates
\begin{equation}\aligned\label{pt2}
&\lambda_j^2-4\lambda_i\lambda_j\leq\a S,\qquad
\forall i,j\\
&3(\sr{A}-2\sr{B})\le \a S|\n B|^2\endaligned
\end{equation}
with $\a=\frac{\sqrt{17}+1}{2}$ in \cite{p-t2} and
\begin{equation}\label{ci}\aligned
\sum_{i,j,k,l}h_{ijkl}^2\geq&\frac{3}{4}\sum_{i,j}(\lambda_i-\lambda_j)^2(1+\lambda_i\lambda_j)^2
+\frac{3S(S-n)^2}{2(n+4)}\\
=&\frac{3}{2}(Sf_{ 4}-f_{ 3}^{
2}-S^2-S(S-n))+\frac{3S(S-n)^2}{2(n+4)}
\endaligned
\end{equation}
in \cite{c-i}.  There is an integral equality
\begin{equation}\label{pt3}
\int_M(\sr{A}-2\sr{B})=\int_M(Sf_{ 4}-f_{ 3}^{
2}-S^2-\frac{1}{4}|\nabla S|^2),
\end{equation}
where $f_{ 3}=\sum_i\lambda_i^3$, $f_{ 4}=\sum_i\lambda_i^4$ in
\cite{p-t2}. In a general local orthonormal frame field
$f_3=\sum_{i, j, k}h_{ij}h_{jk}h_{ki}$ and $f_4=\sum_{i, j,
k,l}h_{ij}h_{jk}h_{kl}h_{li}.$

\bigskip\bigskip

\Section{New Estimates of $\sr{A}-2\sr{B}$}{New Estimates of
$\sr{A}-2\sr{B}$}

\medskip

In this paper we always assume  $S\ge n.$

Define
$$\sr{F}=\sum_{i,j}(\lambda_i-\lambda_j)^2(1+\lambda_i\lambda_j)^2,$$
then
$$\sr{F}=2(Sf_{4}-f_{ 3}^{ 2}-S^2-S(S-n)).$$
It is a higher order invariant of the second fundamental form.
\begin{lem}\label{ml}When the dimension $n\geq4$,
$$3(\sr{A}-2\sr{B})\leq 2S|\nabla
B|^2+C_1(n)|\nabla B|^2\sr{F}^{\f{1}{3}},$$ where
$C_1(n)=(\sqrt{17}-3){(6(\sqrt{17}+1))^{-\f{1}{3}}}(\frac{2}{\sqrt{17}}
-\frac{\sqrt{2}}{17}-\frac{1}{n})^{-\f{2}{3}}$.
\end{lem}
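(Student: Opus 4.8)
\smallskip
\noindent\emph{Proof strategy.} The plan is to expand both sides at a point in terms of the eigenvalues of $B$ and the components $h_{ijk}$, to reduce the assertion to a single pointwise inequality involving only the eigenvalues, and to prove the latter by a short case analysis ending in a one--variable optimization.

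Fix a point and diagonalize $B$, so $h_{ij}=\la_i\de_{ij}$ and, as recalled in Section~2,
$$3(\sr{A}-2\sr{B})=\sum_{i,j,k}h_{ijk}^{2}\,P_{ijk},\qquad P_{ijk}:=\la_i^{2}+\la_j^{2}+\la_k^{2}-2(\la_i\la_j+\la_j\la_k+\la_k\la_i).$$
From the algebraic identity $P_{ijk}=2(\la_i^{2}+\la_j^{2}+\la_k^{2})-(\la_i+\la_j+\la_k)^{2}\le 2(\la_i^{2}+\la_j^{2}+\la_k^{2})$ one gets $P_{ijk}\le 2S$ for all triples of distinct indices, and $P_{iii}=-3\la_i^{2}\le0$; the only triples that can exceed $2S$ are those with exactly two equal indices, for which (taking $i=j\ne k$) $P_{iik}=\la_k^{2}-4\la_i\la_k$. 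Since $h_{ijk}$ is totally symmetric, it therefore suffices to establish the pointwise bound
$$\la_k^{2}-4\la_i\la_k\le 2S+C_1(n)\,\sr{F}^{1/3}\qquad(i\ne k).$$
Indeed, this makes $P_{ijk}\le 2S+C_1(n)\sr{F}^{1/3}$ for every triple, so summing $P_{ijk}h_{ijk}^{2}$ over all triples gives $3(\sr{A}-2\sr{B})\le\big(2S+C_1(n)\sr{F}^{1/3}\big)|\n B|^{2}$, which is the lemma.

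To prove the displayed pointwise bound I would split on the sign of $1+\la_i\la_k$. If $\la_i\la_k\ge-1$, then $\la_k^{2}-4\la_i\la_k\le\la_k^{2}+4\le S+4\le S+n\le 2S$, using $n\ge4$ and $S\ge n$, so the $\sr{F}$--term is not even needed here. If $\la_i\la_k<-1$, then $\la_i,\la_k$ have opposite signs; put $u=|\la_i|,\ v=|\la_k|$, so $uv>1$. Keeping only the two terms of $\sr{F}$ indexed by the ordered pairs $(i,k)$ and $(k,i)$ gives $\sr{F}\ge 2(\la_i-\la_k)^{2}(1+\la_i\la_k)^{2}=2(u+v)^{2}(uv-1)^{2}$, while $S\ge\la_i^{2}+\la_k^{2}=u^{2}+v^{2}$ yields $\la_k^{2}-4\la_i\la_k-2S\le 4uv-2u^{2}-v^{2}$; the right side is positive only when $r:=v/u\in(2-\sqrt2,\,2+\sqrt2)$, and in that range $\la_k^{2}-4\la_i\la_k>2S\ge2n$ forces $uv>2n/(6+\sqrt2)$ --- this is exactly where $S\ge n$ enters, pinning $uv$ away from $1$. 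Expressing everything in $w:=uv$ and $r$, and using that $w\mapsto w/(w-1)$ is decreasing together with the lower bound on $w$, the target inequality collapses to an explicit one--variable inequality $C_1(n)\ge g_n(r)$ on $r\in(2-\sqrt2,2+\sqrt2)$.

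The main obstacle is precisely this last verification with the sharp constant. The excess $\la_k^{2}-4\la_i\la_k-2S$ can be a fixed fraction of $S$: this occurs near the extremal configuration of Peng--Terng, where $\la_i\la_k=-2S/\sqrt{17}$ and $\la_k^{2}-4\la_i\la_k=\a S$ with $\a=\tfrac{1+\sqrt{17}}{2}$, so the excess equals $\tfrac{\sqrt{17}-3}{2}S$; hence the constraints $S\ge u^{2}+v^{2}$ and $S\ge n$ must be used very economically. I expect the factor $\big(\tfrac{2}{\sqrt{17}}-\tfrac{\sqrt2}{17}-\tfrac1n\big)^{-2/3}$ in $C_1(n)$ to arise from estimating $\sr{F}^{1/3}$ near that configuration while tracking, in terms of $n$, how far $uv$ is from $1$, and the remaining factor $(\sqrt{17}-3)\big(6(\sqrt{17}+1)\big)^{-1/3}$ to be the ratio of the excess $\tfrac{\sqrt{17}-3}{2}S$ to the cube root of the resulting lower bound for $\sr{F}$. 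Checking $C_1(n)\ge g_n(r)$ on $(2-\sqrt2,2+\sqrt2)$ then completes the proof.
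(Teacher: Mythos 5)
Your reduction — diagonalize $B$, bound $P_{ijk}=\la_i^2+\la_j^2+\la_k^2-2(\la_i\la_j+\la_j\la_k+\la_k\la_i)$ by $2S$ for distinct triples, and reduce to the pointwise estimate $\la_j^2-4\la_i\la_j\leq 2S+C_1(n)\sr{F}^{1/3}$ — is exactly the paper's reduction, and your case split on the sign of $1+\la_i\la_j$ and the lower bound $\sr{F}\geq 2(\la_i-\la_j)^2(1+\la_i\la_j)^2$ are also essentially what the paper does.

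However, the crucial remaining step, verifying that the resulting one-variable inequality holds with the \emph{stated} constant $C_1(n)$, is not carried out, and the route you sketch would not reach it. The uniform lower bound $uv>2n/(6+\sqrt 2)$ that you propose to feed into the decreasing function $w\mapsto w/(w-1)$ is far too weak: for $n=4$ it gives only $uv>1.08$, hence $uv/(uv-1)\lesssim 13.5$, and the factor $\bigl(uv/(uv-1)\bigr)^2\approx 183$ multiplied by $\max_r\frac{(4-2/r-r)^3}{2(1/r+2+r)}\approx 0.2$ yields roughly $37$, whereas the lemma requires $C_1(4)^3\approx 2$. Even the sharper $r$-dependent bound $uv>2n/(r+4)$ is only marginally adequate and, more importantly, you have thrown away information by replacing $-2S$ with $-2(u^2+v^2)$: the honest quantity to control is $\psi-2S$ itself, not $\psi-2(u^2+v^2)$. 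The paper avoids these losses by keeping $S$ as the scaling variable: it writes $\psi=tS$ with $t\in(2,\a]$, deduces $\la_j^2\leq\frac{1}{17}\bigl(t+8+4\sqrt{4+t-t^2}\bigr)S$ from $S\geq\la_i^2+\la_j^2$, obtains the sharp lower bound $-\la_i\la_j\geq\frac{1}{17}\bigl(4t-2-\sqrt{4+t-t^2}\bigr)S$, and then uses $S/n\geq 1$ only to replace the additive $1$ in $(1+\la_i\la_j)^2$ by $S/n$. Everything then scales as $S^3$ and cancels, reducing the problem to minimizing a single function $\zeta(t)$ on $(2,\a]$, whose minimum (after the rough replacement $\sqrt{4+t-t^2}\to\sqrt 2$) produces exactly the stated $C_1(n)$. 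Your sketch stops short of this, so the proposal has a genuine gap: the pointwise inequality with the claimed constant is not established, and the specific estimates you outline would produce a much larger constant.
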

\begin{proof}
If there exist $i\neq j$ such that
$\lambda_j^2-4\lambda_i\lambda_j=tS>2S$, then by
$$S\geq\lambda_i^2+\lambda_j^2=\left(\frac{tS-\lambda_j^2}{4\lambda_j}\right)^2+\lambda_j^2,$$
we have $$\lambda_j^2\leq\frac{1}{17}(t+8+4\sqrt{4+t-t^2})S,$$
moreover,
\begin{equation}\label{l.1}
-\lambda_i\lambda_j\geq\frac{1}{4}\left(t-\frac{1}{17}(t+8+4\sqrt{4+t-t^2})\right)S=\frac{1}{17}(4t-2-\sqrt{4+t-t^2}\
)S.
\end{equation}
On the other hand,
\begin{equation}\label{l.2}
(\lambda_i-\lambda_j)^2=\frac{3}{4}\lambda_j^2-2\lambda_i\lambda_j+\lambda_i^2+\frac{1}{4}\lambda_j^2
\geq\frac{3}{4}\lambda_j^2-3\lambda_i\lambda_j= \frac{3t}{4}S.
\end{equation}
By the assumptions $n\ge 4$ and $S\geq n,$ and (\ref{l.1}) implies
$-\lambda_i\lambda_j\geq0.26S$, then combining (\ref{l.1}) and
(\ref{l.2}), we obtain
\begin{equation}\label{l.3}\aligned
\sr{F}=\sum_{k,l}&(\lambda_k-\lambda_l)^2(1+\lambda_k\lambda_l)^2\\
&\geq 2 (\lambda_i-\lambda_j)^2(1+\lambda_i\lambda_j)^2
\geq \frac{3t}{2}S(1+\lambda_i\lambda_j)^2\\
&\geq\frac{3t}{2}\left(-\lambda_i\lambda_j-\frac{S}{n}\right)^2S\geq
\frac{3t}{2}\left(\frac{1}{17}(4t-2-\sqrt{4+t-t^2})-\frac{1}{n}\right)^2S^3.
\endaligned
\end{equation}
Define a function
\begin{equation}\label{l.4}
\zeta(t)\triangleq
\frac{t}{(t-2)^3}\left(\frac{1}{17}(4t-2-\sqrt{4+t-t^2})-\frac{1}{n}\right)^2
\end{equation} on the interval $(2,\frac{\sqrt{17}+1}{2}].$ Then we have following rough estimate,
\begin{equation}\label{l.5}\aligned
\min_{(2,\frac{\sqrt{17}+1}{2}]}\zeta(t)&\ge
\min_{(2,\frac{\sqrt{17}+1}{2}]}\frac{t}{(t-2)^3}\left(\frac{1}{17}(4t-2-\sqrt{2})-\frac{1}{n}\right)^2\\
&=4\frac{\sqrt{17}+1}{(\sqrt{17}-3)^3}\left(\frac{2}{\sqrt{17}}-\frac{\sqrt{2}}{17}-\frac{1}{n}\right)^2.
\endaligned\end{equation}
From (\ref{l.3}), (\ref{l.4}) and (\ref{l.5}) we obtain
\begin{equation}\label{l.6}\aligned (\la_j^2-4\la_i\la_j&-2S)^3 =(t-2)^3S^3
 \le\f{2\sr{F}}{3\zeta(t)}\\
 & \le \frac{(\sqrt{17}-3)^3}{6(\sqrt{17}+1)}
 \left(\frac{2}{\sqrt{17}}-\frac{\sqrt{2}}{17}-\frac{1}{n}\right)^{-2}\sr{F}
 \triangleq (C_1(n)\sr{F}^{1/3})^3,
 \endaligned\end{equation}
where
$C_1(n)=(\sqrt{17}-3){(6(\sqrt{17}+1))^{-\f{1}{3}}}(\frac{2}{\sqrt{17}}-\frac{\sqrt{2}}{17}-\frac{1}{n})^{-\f{2}{3}}.$
By the definition of $\sr{A}$ and $\sr{B}$ and (\ref{l.6}), we
have
\begin{equation}\label{l.7}\aligned
3(\sr{A}-2\sr{B})=&\sum_{i,j,k}h_{ijk}^2(\lambda_i^2+\lambda_j^2+\lambda_k^2-2\lambda_i\lambda_j
-2\lambda_j\lambda_k-2\lambda_i\lambda_k)\\
\leq&\sum_{i,j,k\ distinct}h_{ijk}^2(2(\lambda_i^2+\lambda_j^2+\lambda_k^2)-(\lambda_i+\lambda_j+\lambda_k)^2)\\
&\qquad +3\sum_{j,i\neq j}h_{iij}^2(\lambda_j^2-4\lambda_i\lambda_j)\\
\leq&2S\sum_{i,j,k\ distinct}h_{ijk}^2+3\sum_{i\neq
j}h_{iij}^2(2S+C_1(n)\sr{F}^{1/3})\\
\le& 2S|\n B|^2+ C_1(n)\sr{F}^{1/3}.
\endaligned
\end{equation}
The lemma holds obviously when
$\lambda_j^2-4\lambda_i\lambda_j\le 2 S$ for any $i$ and $j.$
\end{proof}

The following estimates are applicable for higher dimension.

\begin{lem}\label{rl4}
If $n\geq6$ and $n\leq S\leq\frac{16}{15}n,$ then
$$3(\sr{A}-2\sr{B})\leq(S+4)|\nabla B|^2+C_3(n)|\nabla B|^2\sr{F}^{\f{1}{3}}$$ with
$$C_3(n)=\left(\frac{3-\sqrt{6}-4p}{\sqrt{6}-1+13p}(6-\sqrt{6}-13p)^2\right)^{\f{1}{3}},\;
p=\frac{1}{13(n-2)}.$$
\end{lem}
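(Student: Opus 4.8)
The plan is to run the argument of Lemma~\ref{ml} again, but to drive the leading coefficient down from $2S$ to $S+4$ at the price of enlarging the error constant from $C_1(n)$ to $C_3(n)$; the hypotheses $n\ge6$ and $n\le S\le\frac{16}{15}n$ are precisely what is consumed in this improvement. As in Lemma~\ref{ml}, at a point where $h_{ij}=\lambda_i\delta_{ij}$ I would start from
$$3(\sr{A}-2\sr{B})=\sum_{i,j,k}h_{ijk}^2\bigl(\lambda_i^2+\lambda_j^2+\lambda_k^2-2\lambda_i\lambda_j-2\lambda_j\lambda_k-2\lambda_i\lambda_k\bigr),$$
separate the sum into the terms with $i,j,k$ all distinct, exactly two equal, and all equal (the last has non-positive coefficient and is discarded), and reduce the lemma to two pointwise inequalities: (a) for all distinct $i,j,k$, $\lambda_i^2+\lambda_j^2+\lambda_k^2-2\lambda_i\lambda_j-2\lambda_j\lambda_k-2\lambda_i\lambda_k\le S+4+C_3(n)\sr{F}^{1/3}$; (b) for all $i\ne j$, $\lambda_j^2-4\lambda_i\lambda_j\le S+4+C_3(n)\sr{F}^{1/3}$. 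Multiplying by $h_{ijk}^2$, summing, and regrouping exactly as in (\ref{l.7}) (using $\sum_{i,j,k\text{ distinct}}h_{ijk}^2+3\sum_{i\ne j}h_{iij}^2\le|\n B|^2$) then yields the statement.

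For (a): since $\lambda_i^2+\lambda_j^2+\lambda_k^2\le S$, the left side can exceed $S+4$ only when $\sigma_2:=\lambda_i\lambda_j+\lambda_j\lambda_k+\lambda_k\lambda_i<-2$, and then its excess over $S+4$ is at most $2(-\sigma_2-2)$. Selecting the pair, say $\{i,j\}$, with the most negative product (so $\lambda_i\lambda_j\le\sigma_2/3<0$), I would use $(\lambda_i-\lambda_j)^2\ge-4\lambda_i\lambda_j$, $(1+\lambda_i\lambda_j)^2\ge(|\lambda_i\lambda_j|-1)^2$ when $|\lambda_i\lambda_j|>1$, and $\sr{F}\ge2(\lambda_i-\lambda_j)^2(1+\lambda_i\lambda_j)^2$; combined with the fact that forcing $\lambda_i^2+\lambda_j^2+\lambda_k^2$ close to $S$ makes the pair products among $\{i,j,k\}$ of size comparable to $S$, hence the corresponding pair terms of $\sr{F}$ of size comparable to $S^3$, this shows that in case (a) a constant strictly smaller than the one required for (b) already suffices, so (a) is subsumed.

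The real work is (b), and this is where I expect the main obstacle to lie. Suppose $\lambda_j^2-4\lambda_i\lambda_j=\tau>S+4$. From $\lambda_i^2+\lambda_j^2\le S$ one gets, as in (\ref{l.1})--(\ref{l.2}), $\lambda_j^2\le\frac1{17}\bigl(\tau+8S+4\sqrt{4S^2+\tau S-\tau^2}\bigr)$, hence explicit lower bounds for $-\lambda_i\lambda_j$ and for $(\lambda_i-\lambda_j)^2\ge\frac34\tau$; moreover $\tau>S+4$ together with $\lambda_i^2+\lambda_j^2\le S$ already forces $-\lambda_i\lambda_j>1$, so $(1+\lambda_i\lambda_j)^2=(-\lambda_i\lambda_j-1)^2$ and $\sr{F}\ge2(\lambda_i-\lambda_j)^2(1+\lambda_i\lambda_j)^2$ becomes an explicit lower bound for $\sr{F}$. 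To extract the sharp constant I would compare against the worst configuration: fix $\lambda_i,\lambda_j$ and take the remaining $n-2$ eigenvalues all equal (this only decreases $\sr{F}$ for given $\tau$ and $S$), so that $\sum_k\lambda_k=0$ and $\sum_k\lambda_k^2=S$ become two explicit relations. Introducing the ratio $r$ of the two eigenvalues involved, the quantity $(\tau-S-4)^3/\sr{F}$ is then, up to terms of order $\frac1{n-2}$, a rational function of $r$ whose leading part is $\frac{r(4-r)^3}{2(1+r)^2}$; its critical equation is $r^2+4r-2=0$, i.e. $r=\sqrt6-2$, which is the source of the $\sqrt6$ in $C_3(n)$. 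Evaluating there and carrying the $\frac1{n-2}$ corrections through a crude monotone estimate of the resulting one-variable function on the relevant interval $\tau\in(S+4,\frac{1+\sqrt{17}}{2}S]$ — this is where $n\ge6$ and $S\le\frac{16}{15}n$ are spent, to control the endpoints — yields $(\tau-S-4)^3\le\bigl(C_3(n)\sr{F}^{1/3}\bigr)^3$ with $C_3(n)$ and $p=\frac1{13(n-2)}$ as stated. The delicate point is thus the optimization in $r$ together with the bookkeeping of the $\frac1{n-2}$-terms that turns the clean extremum $r=\sqrt6-2$ into the explicit constant.
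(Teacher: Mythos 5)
Your proposal correctly identifies the overall architecture of the paper's proof: split the diagonal sum into the distinct-index part governed by $\phi=\lambda_i^2+\lambda_j^2+\lambda_k^2-2\lambda_i\lambda_j-2\lambda_j\lambda_k-2\lambda_i\lambda_k$ and the two-equal part governed by $\psi=\lambda_j^2-4\lambda_i\lambda_j$, prove a pointwise bound $\le S+4+C_3(n)\sr{F}^{1/3}$ for each, and regroup. For $\psi$ you have the right mechanism: set $\lambda_i=-t\lambda_j$, exploit $S\ge\lambda_i^2+\lambda_j^2+\frac{1}{n-2}(\lambda_i+\lambda_j)^2$ (which is exactly the Cauchy--Schwarz relation the paper uses, equivalently the extremal configuration you describe), reduce to a one-variable optimization in $t$ on which the critical equation $t^2+4t-2=0$ produces $t=\sqrt6-2$ in the $n\to\infty$ limit, and carry the $p=\frac{1}{13(n-2)}$ corrections; this matches the paper's $\omega(t,\xi)$ argument and the resulting $t_0=\sqrt{6+54p+9p^2}-2+3p$ and $C_3(n)$.

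The genuine gap is the treatment of $\phi$. You assert that case (a) "is subsumed" because the corresponding constant is strictly smaller than what (b) requires, but you never establish this, and your sketch (working with $\sigma_2$ and arguing that near-saturation of $\lambda_i^2+\lambda_j^2+\lambda_k^2\le S$ makes pair terms of $\sr{F}$ of order $S^3$) is not a usable estimate. In the paper this is a real piece of work: after normalizing $\lambda_i=-x\lambda_j$, $\lambda_k=-y\lambda_j$, one writes $\phi\le S+4+2(a+c)$ with $a=-\lambda_i\lambda_j-1$, $c=(1-x)y\lambda_j^2-1$, uses the pinching hypothesis (through $\lambda_j^2\le\frac{16}{15}(n-1)$) to bound $a,b,c$, and obtains
$(a+c)^3\le\bigl(2+3(\sqrt{\tfrac{15n-16}{5n-16}}-1)^{-1}\bigr)\frac{n-1}{16n-1}\sr{F}$
via a Young-type split with a carefully tuned $\epsilon$. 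One must then verify separately that $C_3(n)^3$ dominates $\bigl(2+3(\sqrt{\tfrac{15n-16}{5n-16}}-1)^{-1}\bigr)\frac{8(n-1)}{16n-1}$ for $n\ge6$, which the paper does by a monotonicity-plus-endpoint check. None of this follows from what you wrote, and the hypotheses $n\ge6$, $S\le\frac{16}{15}n$ are used here as well, not only in (b). A secondary, smaller issue: your claim that replacing the remaining $n-2$ eigenvalues by their common mean "only decreases $\sr{F}$" is unjustified; it is also unnecessary, since both you and the paper ultimately use only the single-pair lower bound $\sr{F}\ge2(\lambda_i-\lambda_j)^2(1+\lambda_i\lambda_j)^2$.
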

\begin{proof}
For any distinct $i,j,k\in\{1,\cdots,n\}$, we define
\begin{equation}\aligned
\phi&=\lambda_i^2+\lambda_j^2+\lambda_k^2-2\lambda_i\lambda_j-2\lambda_j\lambda_k-2\lambda_i\lambda_k,\\
\psi&=\lambda_j^2-4\lambda_i\lambda_j.\nonumber
 \endaligned\end{equation}
Firstly, let us estimate $\phi$.  Without loss of generality, we suppose
$$\lambda_i\lambda_j\leq\lambda_j\lambda_k\leq0,\;
\lambda_i\lambda_k\geq0.$$ Define
$$\lambda_i=-x\lambda_j,\lambda_k=-y\lambda_j, \; x\geq
y\geq0.$$ Now,
\begin{equation}\label{rl.17}
\phi=\lambda_i^2+\lambda_j^2+\lambda_k^2+2(x+y-xy)\lambda_j^2\leq
S+4+2(x\lambda_j^2-1+(1-x)y\lambda_j^2-1).
\end{equation}
Let
$$a=x\lambda_j^2-1, \; b=y\lambda_j^2-1,\; c=(1-x)y\lambda_j^2-1,$$
then (\ref{rl.17}) becomes
\begin{equation}\label{rl.18}
\phi\le S+4+2(a+c).
\end{equation} Noting $S\leq\frac{16}{15}n$ and
$S\geq\lambda_j^2+\frac{1}{n-1}(\sum_{k\neq j}\lambda_k)^2$, we
deduce
\begin{equation}
\lambda_j^2\leq\frac{16}{15}(n-1).
\end{equation}
In the case of $c=(1-x)y\lambda_j^2-1\geq0,$ which implies $x\le
1$ and $a,b\geq0$.
By Cauchy inequality and (3.10),
$$\aligned
c\leq&
\left(x(1-x)-\frac{15}{16(n-1)}\right)\lambda_j^2\leq\frac{4n-19}{32n-17}(x+1)^2\lambda_j^2
\leq\left(1-\frac{16}{5n}\right)\frac{2n-2}{16n-1}(x+1)^2\lambda_j^2,\\
&a\leq
\left(x-\frac{15}{16(n-1)}\right)\lambda_j^2\leq\frac{4n-4}{16n-1}(x+1)^2\lambda_j^2,\\
&b\leq
\left(y-\frac{15}{16(n-1)}\right)\lambda_j^2\leq\frac{4n-4}{16n-1}(y+1)^2\lambda_j^2.
\endaligned$$
For some $\epsilon>0$ to be defined later,
\begin{equation}\aligned
(a+c)^3=&a^3+c^3+3(a^2c+ac^2)\leq a^3+c^3+3\left(a^2c+\frac{\epsilon}{2}a^2c+\frac{1}{2\epsilon}c^3\right)\\
\leq&a^3+b^3+3\frac{2n-2}{16n-1}\left[\left(1+\frac{\epsilon}{2}\right)a^2\left(1-\frac{16}{5n}\right)(x+1)^2
+\frac{1}{\epsilon}b^2(1+y)^2\right]\lambda_j^2.\\
\endaligned\nonumber
\end{equation}
By the definition of $\sr{F},$ we have
\begin{equation}\aligned
\sr{F}\geq
2(\lambda_i-\lambda_j)^2(\lambda_i\lambda_j+1)^2&+2(\lambda_j-\lambda_k)^2(\lambda_j\lambda_k+1)^2\\
&=2(x+1)^2\lambda_j^2a^2+2(y+1)^2\lambda_j^2b^2.
\endaligned\nonumber
\end{equation}
Let $\epsilon=\sqrt{\frac{15n-16}{5n-16}}-1$, then
\begin{equation}\aligned\label{rl.19}
(a+c)^3\leq&
a^3+|b|^3+3\left(\sqrt{\frac{15n-16}{5n-16}}-1\right)^{-1}\frac{2n-2}{16n-1}(a^2(x+1)^2
+b^2(y+1)^2)\lambda_j^2\\
\leq& \left(2+3\left(\sqrt{\frac{15n-16}{5n-16}}-1\right)^{-1}\right)\frac{2n-2}{16n-1}(a^2(x+1)^2+b^2(y+1)^2)\lambda_j^2\\
\leq& \left(2+3\left(\sqrt{\frac{15n-16}{5n-16}}-1\right)^{-1}\right)\frac{n-1}{16n-1}\sr{F}.\\
\endaligned
\end{equation}
If $c\leq0$ and $a\geq0$, then (\ref{rl.19}) holds
clearly. Combining (\ref{rl.18}) and (\ref{rl.19}), we have the following
estimate
\begin{equation}\aligned\label{rl.20}
\phi\leq
S+4+\left(\left(2+3\left(\sqrt{\frac{15n-16}{5n-16}}-1\right)^{-1}\right)\frac{8(n-1)}{16n-1}\sr{F}\right)^{\f{1}{3}}.
\endaligned
\end{equation}
If $a\le0$, then $c\le0$ and the above inequality holds clearly. Hence (3.11) holds which is independent of the sign of $a,b,c$.

Secondly, let's estimate $\psi=\lambda_j^2-4\lambda_i\lambda_j$. 
In the case of $\psi-S-4>0$, then there is a $t>0$ such that $\lambda_i=-t\lambda_j$. Since
$$S\geq\lambda_i^2+\lambda_j^2+\frac{1}{n-2}(\sum_{k\neq
i,j}\lambda_k)^2=\frac{n-1}{n-2}\lambda_i^2+\frac{n-1}{n-2}\lambda_j^2+\frac{2}{n-2}\lambda_i\lambda_j,$$
then
\begin{equation}\aligned\label{rl.21}
\psi\leq& S-4\lambda_i\lambda_j-\frac{n-1}{n-2}\lambda_i^2-\frac{2}{n-2}\lambda_i\lambda_j-\frac{1}{n-2}\lambda_j^2\\
=&S+\left(-\frac{n-1}{n-2}t^2+\frac{4n-6}{n-2}t-\frac{1}{n-2}\right)\lambda_j^2.
\endaligned
\end{equation}
Since $n\geq6$ and (3.10), we have
\begin{equation}\aligned\label{rl.22}
\psi\leq&S+4+\left(-\frac{n-1}{n-2}t^2+\frac{4n-6}{n-2}t-\frac{1}{n-2}\right)\lambda_j^2-\frac{15}{4(n-1)}\lambda_j^2\\
\leq&S+4+\left(-\frac{n-1}{n-2}t^2+\frac{4n-6}{n-2}t-\frac{4}{n-2}\right)\lambda_j^2.
\endaligned
\end{equation}
By Cauchy inequality,
$$-\frac{n-1}{n-2}t^2+\frac{4n-6}{n-2}t-\frac{4}{n-2}\leq(t-\frac{12}{13(n-2)})(4-t).$$
By (\ref{rl.21}), $$\psi\leq
S-4\lambda_i\lambda_j-\lambda_i^2=S+(4t-t^2)\lambda_j^2,$$
combining (\ref{rl.22}), we have
\begin{equation}\aligned\label{rl.23}
(\psi-S-4)^3\leq&((4t-t^2)\lambda_j^2-4)^2\left(-\frac{n-1}{n-2}t^2+\frac{4n-6}{n-2}t-\frac{4}{n-2}\right)\lambda_j^2\\
\leq&((4t-t^2)\lambda_j^2-(4-t))^2\left(t-\frac{12}{13(n-2)}\right)(4-t)\lambda_j^2\\
=&\left(t-\frac{12}{13(n-2)}\right)(4-t)^3(t\lambda_j^2-1)^2\lambda_j^2.\\
\endaligned
\end{equation}
Now we define an auxiliary function
$$\omega(t,\xi)=\left(t-\frac{12}{13(n-2)}\right)(4-t)^3-\xi(1+t)^2.$$ Then
there exists the smallest $\xi$ such that
$$\sup_t\omega(t,\xi)=0.$$ For any $t_0$ satisfying
$\partial_t\omega(t_0,\xi)=\omega(t_0,\xi)=0,$ we solve  the
equations to get $$t_0=\sqrt{6+54p+9p^2}-2+3p,$$
$$\xi=\frac{1}{1+t_0}\left(2-2t_0+\frac{18}{13(n-2)}\right)(4-t_0)^2,$$ here
$p=\frac{1}{13(n-2)}.$ Since
$$t_0\geq\sqrt{6}+10p-2+3p=\sqrt{6}-2+13p,$$ then $$\xi\leq2\
\frac{3-\sqrt{6}-4p}{\sqrt{6}-1+13p}(6-\sqrt{6}-13p)^2,$$
Hence
\begin{equation}
\left(t-\frac{12}{13(n-2)}\right)(4-t)^3\leq2\
\frac{3-\sqrt{6}-4p}{\sqrt{6}-1+13p}(6-\sqrt{6}-13p)^2(1+t)^2.
\end{equation}
Noting
$$\sr{F}\geq2(\lambda_i-\lambda_j)^2(1+\lambda_i\lambda_j)^2=2(t+1)^2\lambda_j^2(t\lambda_j^2-1)^2$$
and (\ref{rl.23}), (3.16), we have
\begin{equation}\aligned\label{rl.24}
(\psi-S-4)^3\leq\frac{3-\sqrt{6}-4p}{\sqrt{6}-1+13p}(6-\sqrt{6}-13p)^2\sr{F}.
\endaligned
\end{equation}
If $\psi-S-4\leq0$, the above inequality holds clearly.
Let
$$C_3(n)=\left(\frac{3-\sqrt{6}-4p}{\sqrt{6}-1+13p}(6-\sqrt{6}-13p)^2\right)^{\f{1}{3}}.$$
By a calculation
$C_3(n)^3\geq\left(2+3\left(\sqrt{\frac{15n-16}{5n-16}}-1\right)^{-1}\right)\frac{8(n-1)}{16n-1}$
for $n\geq6.$ In fact,  both sides of the above inequality are 
increase in $n,$ we only need to check  the case $n=6$ and 
$$C_3(7)^3\geq \frac{7+3\sqrt{3}}{4} 
=\lim_{n\rightarrow\infty}\left(2+3\left(\sqrt{\frac{15n-16}{5n-16}}-1\right)^{-1}\right)\frac{8(n-1)}{16n-1}.$$

Combining (\ref{rl.20}) and (\ref{rl.24}), we
finally obtain
\begin{equation*}\aligned
3(\sr{A}-2\sr{B})\leq &\sum_{i,j,k\
distinct}h_{ijk}^2(\lambda_i^2+\lambda_j^2+\lambda_k^2-2\lambda_i\lambda_j
-2\lambda_j\lambda_k-2\lambda_i\lambda_k)\\
&+3\sum_{j,i\neq j}h_{iij}^2(\lambda_j^2-4\lambda_i\lambda_j)\\
\leq&\sum_{i,j,k\ distinct}h_{ijk}^2(S+4+C_3(n)\sr{F}^{1/3})+3\sum_{j,i\neq j}h_{iij}^2(S+4+C_3(n)\sr{F}^{1/3})\\
\leq&(S+4)|\nabla B|^2+C_3(n)|\nabla B|^2\sr{F}^{1/3}.
\endaligned
\end{equation*}
\end{proof}

\Section{Proof of Theorems}{Proof of Theorems}

\subsection{Proof of Theorem \ref{th1}}

\medskip
Since we already have known result for lower dimension, we assume the dimension $n\ge 4$.
By (\ref{bs}), (\ref{ci}) and (\ref{pt3}), we have
\begin{equation*}\aligned
\int_M\sum_{i,j,k,l}h_{ijkl}^2&\geq\frac{3}{2}\int_M(Sf_{ 4}-f_{ 3}^{ 2}-S^2-S(S-n))+\int_M\frac{3S(S-n)^2}{2(n+4)}\\
&=\frac{3}{2}\int_M(Sf_{ 4}-f_{ 3}^{ 2}-S^2)-\frac{3}{2}\int_M|\nabla B|^2+\int_M\frac{3S(S-n)^2}{2(n+4)}\\
&=\frac{3}{2}\int_M(\sr{A}-2\sr{B})+\frac{3}{8}\int_M|\nabla
S|^2-\frac{3}{2}\int_M|\nabla B|^2+\int_M\frac{3S(S-n)^2}{2(n+4)}.
\endaligned
\end{equation*}
Combining (\ref{ci}), for some fixed $0<\theta<1$ to be defined
later, we have
\begin{equation}\label{t1.1}\aligned
\frac{3\theta}{2}\int_M&(\sr{A}-2\sr{B})+\frac{3\theta}{8}\int_M|\nabla S|^2+\frac{3}{4}(1-\theta)\int_M\sr{F}+\int_M\frac{3S(S-n)^2}{2(n+4)}\\
\leq&\frac{3\theta}{2}\int_M|\nabla B|^2+\int_M|\nabla^2B|^2.
\endaligned
\end{equation}
Together with (\ref{pt1}), (\ref{t1.1}) and Lemma \ref{ml}, we
obtain
\begin{equation}\label{t1.2}\aligned
\frac{3}{4}&(1-\theta)\int_M\sr{F}+\int_M\frac{3S(S-n)^2}{2(n+4)}-\left(\frac{3}{2}
-\frac{3\theta}{8}\right)\int_M|\nabla S|^2\\
\leq&\int_M\left(S-2n-3+\frac{3\theta}{2}\right)|\nabla B|^2+\left(3-\frac{3\theta}{2}\right)\int_M(\sr{A}-2\sr{B})\\
\leq&\int_M\left(S-2n-3+\frac{3\theta}{2}\right)|\nabla B|^2
+\left(1-\frac{\theta}{2}\right)\int_M(2S|\nabla B|^2+C_1|\nabla B|^2f^{\frac{1}{3}})\\
\leq&\int_M\left((3-\theta)S-2n-3+\frac{3\theta}{2}\right)|\nabla B|^2+\frac{3}{4}(1-\theta)\int_M\sr{F}\\
&\qquad
+\frac{4}{9}C_1^\frac{3}{2}\left(1-\frac{\theta}{2}\right)^\frac{3}{2}(1-\theta)^{-\frac{1}{2}}\int_M|\nabla
B|^3,
\endaligned
\end{equation}
where we have used Young's inequality in the last step of
(\ref{t1.2}), then
\begin{equation}\label{t1.3}\aligned
\int_M\frac{3S(S-n)^2}{2(n+4)}\leq&\int_M\left((3-\theta)S-2n-3+\frac{3\theta}{2}\right)|\nabla
B|^2\\
&\qquad+\left(\frac{3}{2}-\frac{3\theta}{8}\right)\int_M|\nabla
S|^2+C_2(n,\theta)\int_M|\nabla B|^3,
\endaligned
\end{equation}
 where $C_2(n,\theta)=\frac{4}{9}C_1^\frac{3}{2}(1-\frac{\theta}{2})^\frac{3}{2}(1-\theta)^{-\frac{1}{2}}.$

By (\ref{bs}), for some $\epsilon>0$ to be defined later, we have
\begin{equation}\label{t1.4}\aligned
\int_M|\nabla B|^3=&\int_MS(S-n)|\nabla B|+\frac{1}{2}\int_M|\nabla B|\Delta S\\
=&\int_MS(S-n)|\nabla B|-\frac{1}{2}\int_M\nabla|\nabla B|\cdot\nabla S\\
\le&\int_MS(S-n)|\nabla B|+\epsilon\int_M|\nabla^2
B|^2+\frac{1}{16\epsilon}\int_M |\nabla S|^2.
\endaligned
\end{equation}
Combining (\ref{pt1}) and (\ref{pt2}), we obtain
\begin{equation*}\aligned
\int_M|\nabla^2B|^2\leq\int_M((\alpha+1)S-2n-3)|\nabla
B|^2+\frac{3}{2}\int_M|\nabla S|^2.
\endaligned
\end{equation*}
With the help of the above inequality, (\ref{t1.4}) becomes
\begin{equation}\aligned\label{t1.5}
\int_M|\nabla B|^3&\leq\int_MS(S-n)|\nabla
B|\\&+\int_M\epsilon((\alpha+1)S-2n-3)|\nabla B|^2
+\left(\frac{3\epsilon}{2}+\frac{1}{16\epsilon}\right)\int_M
|\nabla S|^2.
\endaligned
\end{equation}
Multiplying $S$ on the both sides of (\ref{bs}), and integrating
by parts, we see
\begin{equation}\aligned\label{t1.6}
\frac{1}{2}\int_M|\nabla S|^2=&\int_MS^2(S-n)-\int_MS|\nabla B|^2\\
=&\int_MS(S-n)^2+n\int_MS(S-n)-\int_MS|\nabla B|^2\\
=&\int_M(n-S)|\nabla B|^2+\int_MS(S-n)^2.
\endaligned
\end{equation}
Combining (\ref{t1.3}), (\ref{t1.5}) and (\ref{t1.6}), we get
\begin{equation}\aligned\label{t1.7}
0\leq&\int_M\left((3-\theta)S-2n-3+\frac{3\theta}{2}+C_2\epsilon((\alpha+1)S-2n-3)\right)|\nabla
B|^2\\
&+C_2\int_MS(S-n)|\nabla B|
+\left(\frac{3}{2}-\frac{3\theta}{8}+C_2\left(\frac{3\epsilon}{2}+\frac{1}{16\epsilon}\right)\right)
\int_M|\nabla S|^2\\
&\hskip1in-\int_M\frac{3S(S-n)^2}{2(n+4)}\\
\leq&\int_M\left((3-\theta)S-2n-3+\frac{3\theta}{2}+C_2\epsilon((\alpha+1)S-2n-3)\right)|\nabla
B|^2\\
&+C_2\int_MS(S-n)|\nabla B|
-\int_M(S-n)\left(3-\frac{3\theta}{4}+C_2(3\epsilon+\frac{1}{8\epsilon})\right)|\nabla B|^2\\
&\qquad +\left(3-\frac{3\theta}{4}+C_2(3\epsilon+\frac{1}{8\epsilon})-\frac{3}{2(n+4)}\right)\int_MS(S-n)^2 \\
=&\int_M\Big((1-\theta)n-3+\frac{3\theta}{2}+C_2\epsilon(\alpha
n-n-3)\\
&\hskip1in-(S-n)(\frac{\theta}{4}+C_2\epsilon(2-\alpha)+\frac{C_2}{8\epsilon})\Big)|\nabla B|^2\\
&+\left(3-\frac{3\theta}{4}+C_2\left(3\epsilon+\frac{1}{8\epsilon}\right)-\frac{3}{2(n+4)}\right)
\int_MS(S-n)^2\\
&\hskip1in+C_2\int_MS(S-n)|\nabla B|.
\endaligned
\end{equation}
By the assumption $n\leq S\leq n+\delta(n)$,  Cauchy-Schwartz
inequality and (\ref{bs}), we have
\begin{equation}\aligned\label{t1.8}
\int_M&S(S-n)|\nabla B|\leq2(n+\delta)\epsilon\int_MS(S-n)+\frac{1}{8(n+\delta)\epsilon}\int_MS(S-n)|\nabla B|^2\\
=&\int_M\left(2(n+\delta)\epsilon+\frac{S(S-n)}{8(n+\delta)\epsilon}\right)|\nabla
B|^2\leq\int_M\left(2(n+\delta)\epsilon+\frac{S-n}{8\epsilon}\right)|\nabla
B|^2.
\endaligned
\end{equation}

From (\ref{bs}), (\ref{t1.7}) and (\ref{t1.8}) we see that
\begin{equation}\label{t1.9}
0\le \int_M\left((1-\theta)n-3+\frac{3\theta}{2}+O(\ep)\right)|\n
B|^2,
\end{equation}
where we choose $\de=\ep^2.$ We could choose $\th$ close to $1$,
then it is easily seen that there exists $\ep>0,$ such that the
coefficient  of the integral in (\ref{t1.9}) is negative. This
forces $|\n B|=0.$ We now complete the proof of Theorem
\ref{th1}.

\subsection{Proof of Theorem \ref{th3}}
We assume $n\ge 6$.
In the proof of Theorem \ref{th1}, replacing Lemma \ref{ml} by
Lemma \ref{rl4} in (\ref{t1.2}), we have
\begin{equation}\aligned\label{t3.1}
&\frac{3}{4}(1-\theta)\int_M\sr{F}+\int_M\frac{3S(S-n)^2}{2(n+4)}-\left(\frac{3}{2}-\frac{3\theta}{8}\right)
\int_M|\nabla S|^2\\
\leq&\int_M(S-2n-3+\frac{3\theta}{2})|\nabla B|^2
+(1-\frac{\theta}{2})\int_M((S+4)|\nabla B|^2+C_3|\nabla B|^2\sr{F}^{\frac{1}{3}})\\
\leq&\int_M\left((2-\frac{\theta}{2})S-2n+1-\frac{\theta}{2}\right)|\nabla B|^2+\frac{3}{4}(1-\theta)\int_M\sr{F}\\
&\qquad
+\frac{4}{9}C_3^\frac{3}{2}\left(1-\frac{\theta}{2}\right)^\frac{3}{2}(1-\theta)^{-\frac{1}{2}}\int_M|\nabla
B|^3.
\endaligned
\end{equation}
Combining (\ref{t1.5}) and (\ref{t1.6}) we see
\begin{equation}\aligned\label{t3.2}
0\leq&\int_M\Big[-\frac{\theta}{2}(n+1)+1+C_4\epsilon(\alpha n-n-3)\\
&\qquad-(S-n)(1-\frac{\theta}{4}+C_4\epsilon(2-\alpha)+\frac{C_4}{8\epsilon})\Big]|\nabla B|^2\\
&+\left(3-\frac{3\theta}{4}+C_4(3\epsilon+\frac{1}{8\epsilon})-\frac{3}{2(n+4)}\right)
\int_MS(S-n)^2+C_4\int_MS(S-n)|\nabla B|,
\endaligned
\end{equation}
where
$C_4=C_4(n,\theta)=\frac{4}{9}C_3^\frac{3}{2}(1-\frac{\theta}{2})^\frac{3}{2}(1-\theta)^{-\frac{1}{2}}.$

Assume $n\leq S\leq n+\delta(n)$, by (\ref{t1.8}), we have
\begin{equation}\aligned\label{t3.3}
0\leq&\int_M\Big[-\frac{\theta}{2}(n+1)+1+C_4\epsilon(\alpha
n+n-3+2\delta)\\
&\qquad\qquad-(S-n)(1-\frac{\theta}{4}+C_4\epsilon(2-\alpha))\Big]|\nabla B|^2\\
&\quad+\left(3-\frac{3\theta}{4}+C_4(3\epsilon+\frac{1}{8\epsilon})-\frac{3}{2(n+4)}\right)\int_MS(S-n)^2\\
\leq&\int_M\Big[-\frac{\theta}{2}(n+1)+1+C_4\epsilon(\alpha n+n-3+2\delta)\\
&\qquad\qquad-(S-n)(1-\frac{\theta}{4}+C_4\epsilon(2-\alpha))\Big]|\nabla B|^2\\
&\qquad+(3-\frac{3\theta}{4}+C_4(3\epsilon+\frac{1}{8\epsilon})-\frac{3}{2(n+4)})\delta\int_M|\nabla B|^2\\
=&\Big(-\frac{\theta}{2}(n+1)+1+C_4\epsilon(\alpha n+n-3+5\delta)+\frac{C_4}{8\epsilon}\delta\\
&\qquad+(\frac{3(2n+5)}{2(n+4)}-\frac{3\theta}{4})\delta\Big)\int_M|\nabla B|^2\\
&\qquad-\int_M\left(1-\frac{\theta}{4}+C_4\epsilon(2-\alpha)\right)(S-n)|\nabla
B|^2.
\endaligned
\end{equation}

Let $\epsilon=\sqrt{\frac{\delta}{8(\alpha n+n-3+5\delta)}}$ and
$\theta=0.84$, then
$$C_4(n)=\frac{4}{9}\times0.58^{3/2}\times0.16^{-1/2}\times\sqrt{\frac{3-\sqrt{6}-4p}{\sqrt{6}-1+13p}}(6-\sqrt{6}-13p),$$
where $p=\frac{1}{13(n-2)}.$ We have
$C_4(n)\leq\lim_{l\rightarrow\infty}C_4(l)\leq1.1.$ Combining
$\delta(n)\leq\frac{n}{15}$ and $\a=\frac{\sqrt{17}+1}{2}$ we obtain
$0.79+C_4\epsilon(2-\alpha)\geq0.$ From (\ref{t3.3}) we get
\begin{equation}\aligned\label{t3.4}
0\leq\left(-0.42n+0.58+C_4\sqrt{\frac{\delta}{2}(\alpha
n+n-3+5\delta)}+\left(\frac{3(2n+5)}{2(n+4)}-0.63\right)\delta\right)\int_M|\nabla
B|^2.
\endaligned
\end{equation}
If $\delta(n)=\f{n}{23},$ then the coefficient of the integral in
(\ref{t3.4}) is negative, hence,\\  $|\nabla B|\equiv 0,\,
S\equiv n.$ The proof is complete.

\bibliographystyle{amsplain}

\end{document}